\newtheorem{theorem}{Theorem}
\newtheorem{example}[theorem]{\it Example}
\newtheorem{proposition}[theorem]{Proposition}
\newtheorem{definition}[theorem]{Definition}
\font\tenBb=msbm10 \font\sevenBb=msbm7 \font\fiveBb=msbm5
\def\Bb{\fam\Bbfam\tenBb}
\def\R{{\Bb R}}
\def\C{{\Bb C}}
\NewDocumentCommand\UpArrow{O{2.0ex} O{black}}{%
	\mathrel{\tikz[baseline] \draw [->, line width=0.5pt, #2] (0,0) -- ++(0,#1);}
}
\NewDocumentCommand\DownArrow{O{2.0ex} O{black}}{%
	\mathrel{\tikz[baseline] \draw [<-, line width=0.5pt, #2] (0,0) -- ++(0,#1);}
}
\begin{document}
\title{Copulas and preserver problems }

 \author{A. Sani}
  \address{Universit\'e Ibn Zohr, Facult\'e des Sciences, D\'epartement de Math\'ematiques, Agadir, Maroc}
  \email{ahmedsani82@gmail.com}

\author{L. Karbil}
\address{Universit\'e Hassan I, ENSEM, Casablanca, Maroc}
\email{l.karbil@gmail.com }


\footnote{\today}

\subjclass[2010]{Primary   60D06; Secondary 60D60  }

\keywords{Preserver problems-Copulas-Random variables-Statistics.}

\begin{abstract}
	Preserver problems   concern the characterization of  operators on
	general spaces that leave invariant  some categories of subsets or ratios. The most known in the mathematical literature are those of linear  preserver problems (LPP) which date back to the ninth century. Here, we treat the preserver problem in the recent and emerging field of copulas. Precisely, we prove that \emph{copula property} is preserved uniquely under increasing transformations.

\end{abstract}

\maketitle

\section{Introduction}
\setcounter{theorem}{0}
 \setcounter{equation}{0}

\vspace{2cm}

\begin{spacing}{1.5}
The study of Preserver problems and linear preserver problems (LPPs for short) is a vast field and an interesting topic for mathematical researches.  For a historical overview on preserver problems and techniques used in their treatment, the important article of\emph{ Chi-Kwong Li} and \emph{Nam-Kiu Tsingt} \cite{Li92} is recommended and advisable. Recently the linear case  has been intensively  studied. We start by making precise this notion of linear preservation. By 'linear' we mean here  linear or at least  additive
maps on Banach algebras that leave invariant  certain subsets. It is a kind of invariance under linear operations. The mappings defined on Banach spaces which preserve some properties such as invertibility are widely studied mainly in \cite{JS86}, \cite{Ka70} and   \cite{{Jo92}}  and  in another context by \cite{Mb07} where surjective linear maps preserving the set
of Fredholm operators are discussed. In the same spirit, M. Mbkhta et.al \cite{Mb14} pursued in a strong work the description of all additive and  surjective continuous
maps in the algebra of all bounded linear operators acting on a complex
separable infinite-dimensional Hilbert space, preserving some  interesting classes of operators acting such as the operators of finite
ascent, the operators of finite descent or  Drazin invertible ones.  As a pedagogical illustration, one may consider in the elementary algebra the preservation of determinant function on $\mathcal M_n(\C)$ by the linear mappings $\Phi_{P,Q}: A\rightarrow PAQ$ where $P$ and $Q$ are two fixed square matrices. The spectacular result in this direction was the converse characterization by Frobenus \cite{Fr97}. \\
 Recently, the attention is paid to non linear preserver problems since they do not restrict the study to linear operators and allow the preservation  treatment of some emergent stochastic and statistical properties. For example, in \cite{Cu11}, some unnecessary linear continuous bijections on the set of all self adjoint operators  maps $Bs(H)$ which preserve the star order are characterized.\\
In this paper we are concerned with preserver problem of \emph{copula} property. For the best of our knowledge, this problem is less investigated in the literature. The  sense of preservation as well as necessary reminders and complements  on copulas will be made precise bellow. The introductory note  on copulas is based on  the   unsurmountable book of R. Nelson \cite{Ne06}. \\
The term  \emph{copula} appeared for first time in the edifying paper of Abe Sklar at 1959 \cite{Sk59} as an answer to   Maurice Fr\'echet's probability problem. It   became henceforth an efficient tool when statisticians deal with ex-changeability, dependence or symmetry and asymmetry  problems (see  \cite{Sani19} for a functional point of view of these concepts). It is well known that if $C(.,.)$ is a copula then for every nondecreasing functions $\phi$ and $\psi$ on $\R$ the function $C(\phi(.),\psi(.))$ defines a new copula. To describe this situation, we say that \emph{copula property} is preserved by nondecreasing functions. The present paper clarifies and discusses whether only this type of mappings have this property or not.\\
After this brief preamble, the next section is devoted to recall important notions on copulas and  gives a general reminder of most important results on preserving problems.\\
The paper is organized as follows: After the introduction, some reminders on mathematical concepts used here are reviewed as preliminaries. The second section reshuffles essential results on increasing functions in a general framework before specifying the case of $2-$functions on which we limit our work. In the third section,  some properties which are preserved by increasing transformations are discussed. Classical results on this topic will be overflown. At the last section,  some examples are given as illustration.

\section{Preliminaries}

We start by giving an overview on copulas. One may restrict the study on the bivariate case without affecting the generality of the problem. Let $I$ denote from now on the interval $[0,1]$. We assume in addition that all copulas treated here are supported on  $I^2$ which means in probabilistic terms that the laws are diffuse.
\begin{definition}\label{Def1}
	A copula $C$ is a function on $I^2$ into $I=[0,1]$ which satisfies the following conditions for all $(u,v)\in I^2$:
	\begin{enumerate}
		\item $C(0,v)=C(u,0)=0.$
		\item $C(1,v)=v$ and $C(u,1)=u.$
		\item the 2-increasing property: $C(u_2,v_2)-C(u_2,v_1)-C(u_1,v_2)+C(u_1,v_1)\ge0.$
	\end{enumerate}
	
\end{definition}
Let $X$ and $Y$ be two continuous random variables with distribution
functions $F_X$ and $F_Y$. Let $H$ be their  joint distribution function. Then according to Sklar theorem(see \cite{Ne06}), there exists a unique copula $C_{XY}$ such that $H(x,y)=C_{XY}(F_X(x),F_Y(y)).$ The continuity hypothesis on $X$ and $Y$ is assumed just to guarantee  uniqueness of $C_{XY}.$ Copulas are popular in high-dimensional statistical applications as they allow us to  model easily  the distribution of random vectors by estimating marginals and copula separately. There are many \textit{parametric} copula families in the literature, which usually have parameters that control the strength of dependence or symmetry.\\
 
\noindent \textbf{\underline{Question:}} What are transformations $\phi$ and $\psi$  of random variables $X$ and $Y$ which preserve properties (1)-(3) in Definition \ref{Def1} for the  new mapping $C_{\phi(X)\psi(Y)}$?\\
A historical answer to this important question of preservation was given by R. Nelsen in \cite{Ne06}. Precisely, the following theorem was proved therein

\begin{theorem}\label{Nelson_preservation}
Let $X$ and $Y$ be continuous random variables with copula
$C_{XY}$ . If $\phi$ and $\psi$ are strictly increasing on their respective ranks $Ran(X)$ and  $Rank(Y)$
then $$C_{\phi(X)\psi(Y)}=C_{XY}$$
\end{theorem}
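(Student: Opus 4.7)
The plan is to exploit Sklar's theorem together with the fact that strict monotonicity commutes, in a precise sense, with the distribution function.

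First, I would record the elementary identity: if $\phi$ is strictly increasing on $\mathrm{Ran}(X)$, then for every $x$ in the range of $X$,
\[
F_{\phi(X)}(\phi(x))=\mathbb{P}(\phi(X)\le\phi(x))=\mathbb{P}(X\le x)=F_X(x),
\]
where the middle equality uses strict monotonicity (so that $\phi(X)\le\phi(x)$ is equivalent to $X\le x$). The analogous identity holds for $\psi$ and $Y$. This is really the crux of what makes the copula invariant.

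Next I would compute the joint distribution $H_{\phi(X)\psi(Y)}$ at points of the form $(\phi(x),\psi(y))$:
\[
H_{\phi(X)\psi(Y)}(\phi(x),\psi(y)) = \mathbb{P}(\phi(X)\le\phi(x),\,\psi(Y)\le\psi(y)) = \mathbb{P}(X\le x,\,Y\le y) = H(x,y).
\]
Sklar's theorem applied to both pairs $(X,Y)$ and $(\phi(X),\psi(Y))$ then gives
\[
C_{\phi(X)\psi(Y)}\bigl(F_{\phi(X)}(\phi(x)),F_{\psi(Y)}(\psi(y))\bigr)=C_{XY}(F_X(x),F_Y(y)),
\]
and the first step turns the left-hand side into $C_{\phi(X)\psi(Y)}(F_X(x),F_Y(y))$. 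Therefore
\[
C_{\phi(X)\psi(Y)}(u,v) = C_{XY}(u,v) \qquad \text{for every } (u,v)\in\mathrm{Ran}(F_X)\times\mathrm{Ran}(F_Y).
\]

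The remaining, and in my view only delicate, step is to upgrade this pointwise equality on $\mathrm{Ran}(F_X)\times\mathrm{Ran}(F_Y)$ to equality on all of $I^{2}$. Here I would invoke two facts: first, because $X$ and $Y$ are continuous, their distribution functions are continuous, so $\mathrm{Ran}(F_X)$ and $\mathrm{Ran}(F_Y)$ are dense in $I$ (in fact they cover $I$ except possibly at the endpoints); second, every copula is Lipschitz (hence uniformly continuous) on $I^{2}$, a standard consequence of the 2-increasing property in Definition \ref{Def1}(3). Two continuous functions on $I^{2}$ that coincide on a dense subset must coincide everywhere, which finishes the argument. The main obstacle is really this last density/continuity step; the algebraic identity itself is immediate once one unpacks what strict monotonicity buys in the definition of $F_{\phi(X)}$.
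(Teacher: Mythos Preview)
Your argument is correct and follows essentially the same route as the paper's proof: both compute the joint distribution at appropriately chosen points, invoke Sklar's theorem, and use the identity $F_{\phi(X)}(\phi(x))=F_X(x)$ (the paper writes this via $\phi^{-1}$, you avoid the inverse by evaluating only at points $\phi(x)$), then pass from $\mathrm{Ran}(F_X)\times\mathrm{Ran}(F_Y)$ to all of $I^2$ by continuity. Your final density/Lipschitz step is slightly more explicit than the paper's, which simply asserts that continuity makes $(u,v)\mapsto(F_{\phi(X)}(a),F_{\psi(Y)}(b))$ surjective onto $I^2$, but the substance is identical.
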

For convenience, we will rewrite  (see section \ref{section4} below) the original proof of this important theorem which gives a first bridge between linear and non linear preservation problems. 
\noindent Nelson comments on this result  saying that \emph{copula property is preserved.} \\
The discussion about correlation coefficient given so far in the current paper (see section 3 below) gives an elementary example for a quick familiarization of our concern.\\
To clarify how to preserve the copula property, a deep understanding of $2-$increasing behavior of real  functions defined on a given area of $\R^2$ is necessary. In the following section we give enough tools to describe  $2$-increasing functions and understand why conditions (1) and (2) in definition (\ref{Def1}) are necessary to serve as a  bridge between monotonicity of one and two variables.


\section{On 2-increasing functions}

The interest of $2-$monotone functions does not need to be clarified. For our purposes, it suffices to refer to any document tracing the concordance problems in statistics. Mathematically speaking, the suitable framework to a general treatment is the lattice Banach spaces by which one understands  ordered Banach spaces with a positive cone
$P\subset X.$ A lot of topics in classical fields of mathematics used the notion of monotonicity to prove interesting results mainly on fixed points. One may see for example \cite{Hu88} where  the existence of maximal and minimal fixed points are proved. Anyhow for application to copulas, the natural framework should be functions defined on $\R^2.$ It seems obvious that for a given function $f:\ (x,y)\mapsto f(x,y)$ the simultaneous monotony of $f$ with respect to $(x,y)$ will be equivalent to ordinary monotony of partial function $x\mapsto f(x,y)$ and $y\mapsto f(x,y)$ with respect to $x$ and $y$ respectively. Far from being so, counterexamples prove a negative conclusion. Let us begin by clarifying this unpredictable statement

\begin{definition}
	A function $f$ of two variables defined on $\Omega \subset \R^2$ is increasing in volume or $2-$increasing, if for all $(x_2,y_2)\ge (x_1,y_1)$ we have
	
	\begin{equation}\label{volume_monotony}
	[f(x_2,y_2)-f(x_1,y_2)]-[f(x_2,y_1)-f(x_1,y_1)]\ge 0.
	\end{equation}
\end{definition}

As mentionned above, not any 2-increasing function in volume is  nondecreasing in each argument. The examples are not lacking. It suffices to consider the following classical  one:
\begin{example}
let $f$ be the function defined on $I^2$ by: $f(a,b)=(2a-1)(2b-1).$ Then $f$ is $2-$increasing but not  nondecreasing of the first component $a$ for arbitrary values of $b\in I$ nor of the second one $b$ for arbitrary values of $a\in I.$ On the other hand the function $g: (a,b)\rightarrow \max(a,b)$ is nondecreasing on each argument $a$ and $b$ but is far to be $2-$increasing since $ g(1,1)-g(0,1))-(g(1,0)-g(0,0))=-1.$
\end{example}
Even if these negative results occur, it is easy to verify that partial variations of every $2-$increasing function are nondecreasing one-valued mappings. More precisely, we have
\begin{proposition}\label{increasing_property}
	Let $f$ be a $2-$increasing function. Then for all $(b_1,b_2)\in I^2$ such that $b_1<b_2$, the partial function $a\mapsto f(a,b_2)-f(a,b_1)$ is a non decreasing function. Analogously, for every $(a_1,a_2)\in I^2$ such that $a_1<a_2,$ the function $b\mapsto f(a_2,b)-f(a_1,b)$ is nondecreasing on $I.$
	 
\end{proposition}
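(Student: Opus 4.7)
The plan is to observe that the conclusion is essentially a direct repackaging of the defining inequality (\ref{volume_monotony}), so the proof will be short and the main work is just to keep the notation straight.

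First, fix $b_1<b_2$ in $I$ and define $g(a):=f(a,b_2)-f(a,b_1)$. To show $g$ is nondecreasing, I would take arbitrary $a_1,a_2\in I$ with $a_1<a_2$ and compute the difference
\begin{equation*}
g(a_2)-g(a_1)=\bigl[f(a_2,b_2)-f(a_2,b_1)\bigr]-\bigl[f(a_1,b_2)-f(a_1,b_1)\bigr].
\end{equation*}
The right-hand side is precisely the expression appearing on the left of (\ref{volume_monotony}) with the substitution $(x_1,y_1)\leftarrow(a_1,b_1)$ and $(x_2,y_2)\leftarrow(a_2,b_2)$. Since $(a_2,b_2)\ge(a_1,b_1)$, the $2$-increasing hypothesis on $f$ gives $g(a_2)-g(a_1)\ge 0$, i.e.\ $g$ is nondecreasing on $I$.

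For the second statement, I would proceed symmetrically: fix $a_1<a_2$ and set $h(b):=f(a_2,b)-f(a_1,b)$. For $b_1<b_2$, the difference $h(b_2)-h(b_1)$ rearranges to the same four-term combination
\begin{equation*}
\bigl[f(a_2,b_2)-f(a_1,b_2)\bigr]-\bigl[f(a_2,b_1)-f(a_1,b_1)\bigr],
\end{equation*}
which is again nonnegative by (\ref{volume_monotony}). This yields the monotonicity of $h$.

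There is essentially no obstacle here: the only subtlety worth flagging is the very point illustrated by the preceding example, namely that one must not confuse this conclusion with monotonicity of $f$ in each variable separately. What the $2$-increasing property does guarantee is monotonicity of the \emph{increments} $f(a,b_2)-f(a,b_1)$ and $f(a_2,b)-f(a_1,b)$, and that is exactly what the proposition records. I would therefore keep the write-up to a single display of the rearrangement in each direction and a one-line appeal to the definition.
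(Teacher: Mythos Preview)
Your argument is correct and matches the paper's approach exactly: the paper simply states that the proposition is a direct consequence of the volume-monotony inequality (\ref{volume_monotony}), and your proof spells out precisely that rearrangement in each coordinate. There is nothing to add or correct.
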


The proof is a direct consequence of \emph{volume monotony} (\ref{volume_monotony}). At this point, one may see easily that condition (1) in definition (\ref{Def1}) is fundamental to ensure that partial functions $u\mapsto C(u,v)$ and $v\mapsto C(u,v)$ are nondecreasing ones. In probabilistic words, these conditions explain that the margins $u\mapsto C(u,1)$ and $v\mapsto C(1,v)$ of the copula $C$ (which are uniform distributions) are diffuse and supported entirely by the interval $I.$ Nelson \cite{Ne06} took advantage of this quality to prove an interesting analytic property of copulas seen as functions of two variables. He proved that they are Lipschitz continuous as the following proposition states

\begin{proposition}
	Let $C$ be a copula as given by definition (\ref{Def1}). For all $ (x_1, y_2)$, $ (x_1, y_2)$ in $I^2$ we have

\begin{equation}\label{lipsh_cont}
|C(x_2, y_2) - C(x_1, y_1) | \le |x_2-x_1| +|y_2-y_1|
\end{equation}

\end{proposition}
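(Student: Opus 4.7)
The plan is to deduce the Lipschitz estimate from the triangle inequality together with a sharper one-variable Lipschitz bound of constant $1$ in each argument separately. Concretely, I would first insert the intermediate point $C(x_1,y_2)$ and write
\begin{equation*}
|C(x_2,y_2)-C(x_1,y_1)| \le |C(x_2,y_2)-C(x_1,y_2)| + |C(x_1,y_2)-C(x_1,y_1)|,
\end{equation*}
so the task reduces to showing the two one-sided estimates
\begin{equation*}
|C(x_2,y_2)-C(x_1,y_2)| \le |x_2-x_1|, \qquad |C(x_1,y_2)-C(x_1,y_1)| \le |y_2-y_1|.
\end{equation*}

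For the first inequality, assume without loss of generality $x_1\le x_2$. I would extract monotonicity and the slope bound simultaneously from the defining conditions. The lower bound $C(x_2,y_2)-C(x_1,y_2)\ge 0$ follows from Proposition \ref{increasing_property} applied to the rectangle with vertical sides $v=0$ and $v=y_2$, using condition (1) $C(x,0)=0$. For the upper bound, I apply the 2-increasing property (3) of Definition \ref{Def1} to the rectangle with corners $(x_1,y_2)$, $(x_2,y_2)$, $(x_1,1)$, $(x_2,1)$:
\begin{equation*}
C(x_2,1)-C(x_2,y_2)-C(x_1,1)+C(x_1,y_2)\ge 0,
\end{equation*}
and then invoke condition (2), $C(x,1)=x$, to simplify this into
\begin{equation*}
C(x_2,y_2)-C(x_1,y_2)\le x_2-x_1.
\end{equation*}
Combining both bounds gives the desired one-sided Lipschitz estimate. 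The second inequality is obtained symmetrically by exchanging the roles of the two arguments and using $C(1,y)=y$ together with $C(0,y)=0$.

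There is no real obstacle here; the entire proof is bookkeeping with the boundary conditions (1)–(2) and an appropriately chosen rectangle in (3). The only mildly delicate point is remembering to handle the sign: one must verify both $0\le C(x_2,y_2)-C(x_1,y_2)$ and $C(x_2,y_2)-C(x_1,y_2)\le x_2-x_1$ in order to drop the absolute value, and similarly for the $y$-variable. Once the triangle inequality is set up and the two rectangles (one against the right boundary, one against the top boundary) are chosen correctly, inequality (\ref{lipsh_cont}) follows immediately.
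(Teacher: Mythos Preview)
Your argument is correct and is precisely the standard approach the paper alludes to: the text does not write out a proof but attributes the result to Nelsen and notes that it is obtained by combining Proposition \ref{increasing_property} with the boundary conditions (1)--(2) of Definition \ref{Def1}, which is exactly the mechanism you carry out. Nothing further is needed.
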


The fact that a copula enjoys  this property is of great interest. One may see \cite{Sa18} for some clarifications and applications to asymmetry problem of copulas. The reason that we recall this result lies in its importance in computing ratios (coefficients) like $\rho$ of Kendall or $\tau$ of Spearman given by formulas $$\tau=\frac{2}{n(n-1)}\sum_{i<j} sgn(x_j-x_i) sgn(y_j-y_i) \  \mbox{and} \  \rho={1-{\frac {6\sum d_{i}^{2}}{n(n^{2}-1)}}},$$
where $sgn$ is the sign function and $d_i$ is the difference between two ranks of each observation $(x_i,y_i)$ of statistical collected  data $(X,Y)$.\\ It is known that 2-increasing functions preserve such ratios, this yields the interest of such functions.

\section{Improvement of Nelson classical result}\label{section4}
The historical and simple Nelson's reproach  to  linear regression ratios was the instability with usual non linear transformations like square and/or exponential  functions. This kind of mappings perturbs widely  regression dependence given by the correlation coefficient $\rho=\frac{Cov(X,Y)}{\sigma(X)\sigma(Y)}.$ This latter coefficient is just preserved under linear transformations. Indeed, it is elementary to check that for any two  linear scalar transformations $\phi$ and $\psi$(i.e $\phi(x)=ax$ and $\psi(y)=by$ for certain $(a,b)\in R^2$), one has

\begin{eqnarray}\label{rho_preserved1}
	\rho{(\phi(X),\psi(Y))}&=& \frac{Cov(\phi (X),\psi(Y))}{\sigma(\phi(X))\sigma(\psi(Y))}\\
	&=& \frac{Cov(aX,bY)}{ab\sigma(X)\sigma(Y)})\\
	&=& \frac{Cov(X,Y)}{\sigma(X)\sigma(Y)}\\
\label{rho_preserved2}	&= & \rho(X,Y).
\end{eqnarray}

The natural question in this case and in our framework is whether this kind of transformation is the only one which ensures the preservation of the linear regression dependence. An immediate answer to the fact of failure of preservation under non linear transformations follows from the simple consideration of $\phi(t)=\psi(t)=t^2.$ So  for a standard normal random variable $X$, one has $\rho_{(\phi,\psi)}(X,X)=3\ne 1=\rho(X,Y).$\\
Assume now that the coefficient $\rho$ is invariant under all convex transformations $\phi$ and $\psi.$ Without loss of generality and thanks to usual rescaling procedure, one may suppose that the random variables $X$ and $Y$ are centred and reduced. In such case, the assumed preserving property will be expressed namely as:
\begin{equation}\label{linear_preserv}
\frac{Cov(\phi (X),\psi(Y))}{\sigma(\phi(X))\sigma(\psi(Y))}=Cov(X,Y)\ \text{for all convex functions}\ \phi \ \text{and}\ \psi .
\end{equation}
 
\textbf{\underline{Question}}: Under the hypothesis (\ref{linear_preserv}), what will be the general form of functions $\phi$ and $\psi$? More precisely, if we assume that (\ref{linear_preserv}) holds for every choice of random variables $X$ and $Y$, what should be the expressions of $\phi(x)$ and $\psi(y)$ when $x$ and $y$ vary on $\R$?\\
The elementary calculus in (\ref{rho_preserved1})-(\ref{rho_preserved2})  says in other words that all  linear functions belong to the desired class. Are they alone? \\

\noindent The following proposition gives a positive answer
\begin{proposition}\label{prop_lin_pres_rho}
	Let $\phi$ and $\psi$  real valued functions such that (\ref{linear_preserv}) holds. Then 
	$$
	\forall (x,y)\in \R^2: \  \  \phi(x)=ax+b \  \text{and} \  \psi(y)=a'y+b'
	$$
	 for some real constants $a$, $a'$, $b$ and $b'.$ 
\end{proposition}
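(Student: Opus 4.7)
The plan is to specialize hypothesis (\ref{linear_preserv}) to the parametric family of jointly Gaussian pairs $(X,Y)$ with standard normal marginals and correlation $\rho\in[-1,1]$, and to exploit the orthogonality of Hermite polynomials under such joint laws. Recall that if $He_n$ denotes the $n$-th probabilist's Hermite polynomial, then for such a pair one has $E[He_m(X)He_n(Y)]=\delta_{mn}\,n!\,\rho^{n}$.

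First, I would take $Y=X\sim N(0,1)$, for which the correlation of $X$ with itself is $1$. The hypothesis then forces the correlation of $\phi(X)$ and $\psi(X)$ to equal $1$, and the equality case of Cauchy--Schwarz yields constants $\alpha>0$ and $\beta\in\R$ with $\psi(X)=\alpha\phi(X)+\beta$ almost surely; hence $\psi=\alpha\phi+\beta$ Lebesgue-almost everywhere on $\R$. Substituting this identity into (\ref{linear_preserv}) for an arbitrary correlated pair of standard Gaussians, and cancelling the positive factor $\alpha$, reduces the hypothesis to
$$\mathrm{Cov}(\phi(X),\phi(Y))=\rho\,\mathrm{Var}(\phi(X)),\qquad \rho\in[-1,1].$$

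Next, I would expand $\phi=\sum_{n\ge 0}c_n He_n$ in $L^2$ of the standard Gaussian measure and insert the expansion into the identity above. By the Hermite orthogonality relation quoted in the first paragraph, its left-hand side equals $\sum_{n\ge 1}c_n^{2}\,n!\,\rho^{n}$, while the right-hand side equals $\rho\sum_{n\ge 1}c_n^{2}\,n!$. Identifying these two power series in $\rho$ coefficient by coefficient forces $c_n=0$ for every $n\ge 2$, so $\phi(t)=c_0+c_1 t$ Lebesgue-almost everywhere on $\R$, and consequently $\psi(t)=\alpha c_1 t+(\alpha c_0+\beta)$ almost everywhere.

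The main obstacle will be promoting these almost-everywhere identities to the pointwise statement required by the proposition. This is naturally handled under the regularity implicit in the copula framework — namely that $\phi$ and $\psi$ act continuously on continuous random variables in the sense of Sklar's theorem — since an affine function agreeing with a continuous $\phi$ almost everywhere must coincide with it at every point of $\R$. Setting $a=c_1$, $b=c_0$, $a'=\alpha c_1$, $b'=\alpha c_0+\beta$ then yields the stated form.
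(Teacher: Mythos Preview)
Your argument is correct and takes a genuinely different route from the paper's. The paper proceeds by assuming in addition that $\psi$ is invertible, replacing $Y$ by $\psi^{-1}(Y)$ in~(\ref{linear_preserv}) so that the identity becomes $E\big[(\phi(X)-E\phi(X))\,Y\big]=\lambda\,E(XY)$ with $\lambda=\sigma(\phi(X))\sigma(\psi(Y))$, and then invoking the fact that this holds for \emph{every} random variable $Y$ to conclude $\phi(X)-E\phi(X)=\lambda X$ almost surely; symmetry handles $\psi$.

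The contrast is this: the paper's method is shorter and uses no special-function machinery, but it imports an extra invertibility hypothesis on $\psi$ and leaves the passage ``holds for all $Y$ $\Rightarrow$ pointwise equality'' somewhat implicit. Your approach avoids any structural assumption on $\psi$ by first pinning down $\psi=\alpha\phi+\beta$ via the Cauchy--Schwarz equality case at $\rho=1$, and then isolates the Hermite coefficients of $\phi$ through the Mehler-type identity $E[He_m(X)He_n(Y)]=\delta_{mn}\,n!\,\rho^{n}$, which turns~(\ref{linear_preserv}) into a polynomial identity in $\rho$. This buys you a cleaner and more self-contained derivation that only consumes the hypothesis on the Gaussian test family, at the cost of requiring $\phi\in L^{2}(\gamma)$ --- an assumption that is in any case forced by the finiteness of $\sigma(\phi(X))$ for Gaussian $X$. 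Both proofs arrive at an almost-everywhere conclusion and need the same continuity remark to upgrade it to a pointwise one.
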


\begin{proof}
All random variables are considered on the  same probability space 
$(\Omega; \mathcal F; \mathbb{P})$. This is possible thanks to Skohorod theorem (see \cite{Ber15} among others) stated for other concerns that we do not develop here (mainly types of stochastic convergence). After reducing and  centering the random variables $X$ and $Y$ the condition (\ref{linear_preserv}) is equivalent to 
	$$
	E[(\phi(X)-E(\phi(X))). (\psi(Y)-E(\psi(Y)))]=\sigma(\phi(X))\sigma(\psi(Y))E(XY).
	$$
	For the sake of simplicity and readability,  let us denote $\lambda=\sigma(\phi(X))\sigma(\psi(Y)).$ In order to dodge uniqueness of antecedent we assume in addition that $\psi$ is invertible. For a particular choice $\psi^{-1}(Y)$ instead of $Y$, and taking into account that $E(Y)=0$ this yields 
		$$
	E[(\phi(X)-E(\phi(X))). Y]=\lambda E(XY).
	$$
	Since this last equality occurs for all random variables $Y$, classical results on Lebesgue integration theory lead to  
	$$(\phi(X)-E(\phi(X)))=\lambda X,\ \mathbb{P}-a.e.$$
	
	On the set $\{ Y\ne0\}=\{ \omega, Y(\omega)\ne 0\}$ that we may assume with null $\mathbb{P}-$measure, one has $\phi(X)-E(\phi(X))=\lambda X.$ The claim follows by taking $a=\lambda$ and $b=E(\phi(X))$ as long as this latter quantity is well defined. \\
	The manifest symmetry in (\ref{linear_preserv}) ensures that an analogous argument prevails for the function $\psi$.
	\end{proof}

We hope now to give a similar argument for the \emph{copula preserver problem}. To do so, let's go back to copulas and recall the classical result of Nelson \cite[pages 25 and 26]{Ne06} concerning the preservation of copula property under strictly increasing transformations
\begin{proposition}
 Let $X$ and $Y$ be continuous random variables associated with the copula
$C_{(X,Y)}$ . If $\phi$ and $\psi$ are respectively strictly increasing on $Ran(X)$ and $Ran(Y)$,
then $C_{(X,Y)}=C_{(\phi(X),\psi(Y)}$ . 	
	
\end{proposition}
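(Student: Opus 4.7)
The plan is to unwind the identity through Sklar's theorem. Since $X$ and $Y$ are continuous and $\phi, \psi$ are strictly increasing on $Ran(X)$ and $Ran(Y)$ respectively, the transformed variables $\phi(X)$ and $\psi(Y)$ are again continuous, and Sklar's theorem furnishes a \emph{unique} copula $C_{(\phi(X),\psi(Y))}$ representing their joint distribution. The whole argument then boils down to producing that representation in terms of $C_{(X,Y)}$ and invoking uniqueness.

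First, I would compute the marginal distribution functions of $\phi(X)$ and $\psi(Y)$ using strict monotonicity. Because $\phi$ is strictly increasing on $Ran(X)$, the event $\{\phi(X)\le x\}$ coincides almost surely with $\{X\le \phi^{-1}(x)\}$ for $x$ in $\phi(Ran(X))$, so that
\begin{equation*}
F_{\phi(X)}(x)=F_X(\phi^{-1}(x)),\qquad F_{\psi(Y)}(y)=F_Y(\psi^{-1}(y)).
\end{equation*}
Here one must be slightly careful to handle values of $x$ that lie outside $\phi(Ran(X))$ by passing through the appropriate right-continuous extension of $\phi^{-1}$; this is the one mildly technical point, and it is where the continuity hypothesis on $X$ and $Y$ is used to ensure no mass sits at the boundary points of the range.

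Next, I would compute the joint distribution function in the same way:
\begin{equation*}
H_{\phi(X),\psi(Y)}(x,y)=\mathbb{P}\bigl(X\le\phi^{-1}(x),\, Y\le\psi^{-1}(y)\bigr)=H_{X,Y}\bigl(\phi^{-1}(x),\psi^{-1}(y)\bigr).
\end{equation*}
Applying Sklar's representation to $H_{X,Y}$ and substituting the marginal identities above yields
\begin{equation*}
H_{\phi(X),\psi(Y)}(x,y)=C_{(X,Y)}\bigl(F_X(\phi^{-1}(x)),F_Y(\psi^{-1}(y))\bigr)=C_{(X,Y)}\bigl(F_{\phi(X)}(x),F_{\psi(Y)}(y)\bigr).
\end{equation*}

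Finally, by the uniqueness clause of Sklar's theorem applied to the continuous pair $(\phi(X),\psi(Y))$, the copula $C_{(\phi(X),\psi(Y))}$ is the unique function on $I^2$ satisfying $H_{\phi(X),\psi(Y)}=C_{(\phi(X),\psi(Y))}(F_{\phi(X)},F_{\psi(Y)})$. Comparing with the displayed line forces $C_{(\phi(X),\psi(Y))}=C_{(X,Y)}$ on the image of $(F_{\phi(X)},F_{\psi(Y)})$, which is dense in $I^2$ by continuity; continuity of the copulas (guaranteed by the Lipschitz estimate \eqref{lipsh_cont}) then promotes equality to all of $I^2$. The only real obstacle is the careful bookkeeping on the range of $\phi$ and $\psi$ just described, and that obstacle is handled by the continuity of the underlying distributions.
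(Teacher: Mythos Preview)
Your argument is correct and is essentially the paper's own proof: both compute $F_{\phi(X)}=F_X\circ\phi^{-1}$ and $F_{\psi(Y)}=F_Y\circ\psi^{-1}$, push the joint law through $(\phi^{-1},\psi^{-1})$, match the result against the Sklar representation of $(X,Y)$, and then extend the identity from the range of the transformed marginals to all of $I^2$. The only cosmetic difference is that you invoke density plus the Lipschitz estimate for that last extension, whereas the paper simply asserts that continuity makes $(F_{\phi(X)},F_{\psi(Y)})$ surjective onto $I^2$.
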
	
 A proof may be found in \cite{Ne06}. As promised and for convenience we rewrite briefly the argument given therein. 
 \begin{proof}
 	In the sequel if $Z$ is a random variable, we denote from now on $F_Z$ its distribution function.\\
 	As $X$ and $Y$ are assumed continuous, the range problem  is excluded. On the other hand, for all $(a,b)$ in $\R^2$ and all strictly increasing functions $\phi$ and $\psi$, the random variables $\phi(X)$ and $\psi(Y)$ are continuous and the direct computation leads to

\begin{eqnarray*}
	C_{(\phi(X),\psi(Y)}(F_{\phi(X)}(a),F_{\psi(Y)}(b))&=& P((\phi(X)\le a,\psi(Y)\le b)\\
&=& P(X\le \phi^{-1}(a),Y\le \psi^{-1}(b))\\
&=& C_{(X,Y)}(F_X o \phi^{-1}(a),F_Yo \psi^{-1}(b))\\
&= & C_{(X,Y)}(F_{\phi(X)}(a),F_{\psi(Y)}(b)).
\end{eqnarray*} 

The continuity assumption ensures that every couple $(u,v)\in [0,1]^2$ may be written $(u,v)=(F_{\phi(X)}(a),F_{\psi(Y)}(b))$ for some convenient $(a,b)\in \R^2.$ This yields
$$ \forall (u,v)\in [0,1]^2: \  C_{(\phi(X),\psi(Y)}(u,v)=C_{(X,Y)}(u,v).$$	
 \end{proof}	
 A real  challenge consists in proving the converse in the following sense: If the copula property is preserved by the couple $(\phi;\psi)$ for all continuous random variables $X$ and $Y$, the functions  $\phi$ and $\psi$ are automatically increasing? if not what should be their general form?\\
 
 A first and simple analysis leads to the following remark:  not every real value linear functions  $\phi$ and $\psi$ warrant the preservation of copula property. This is obvious since if one or both of $\phi$ and $\psi$ are not increasing, the preservation property fails according to \cite[Theorem 2.4.4]{Ne06} (choose $a$ or $a'$ strictly negative in proposition \ref{prop_lin_pres_rho}). Hence the problem is far from being  LPP one. \\
 The following proposition gives a partial answer that we hope to improve in the future.
  
 \begin{proposition}
 	Let $\phi$ and $\psi$ be two real value  functions such that for a given random vector $(X,Y)$, the identity  $C_{(X,Y)}=C_{(\phi(X),\psi(Y))}$  holds.
 	Then 	
 	$\phi$ and $\psi$ are increasing.
 \end{proposition}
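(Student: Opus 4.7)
My plan is to argue by contradiction, following the strategy the authors used for the correlation-preserver result (Proposition \ref{prop_lin_pres_rho}). The overall idea is to first derive, from copula equality, a pointwise identity for the marginal distribution functions, and then convert that identity into monotonicity.

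Step one: applying Sklar's theorem to both pairs and using $C:=C_{(X,Y)}=C_{(\phi(X),\psi(Y))}$, I would write, for every $(x,y)\in\R^{2}$,
\begin{equation*}
C(F_{X}(x),F_{Y}(y))=P(X\le x,\,Y\le y),\qquad C(F_{\phi(X)}(\phi(x)),F_{\psi(Y)}(\psi(y)))=P(\phi(X)\le\phi(x),\,\psi(Y)\le\psi(y)).
\end{equation*}
Passing to the limit $y\to+\infty$ (along a sequence for which $\psi(y)$ tends to $\sup\psi$) and using the copula marginal property $C(u,1)=u$ together with continuity of the distribution functions, both right-hand sides collapse to marginal events and one extracts
\begin{equation*}
F_{X}(x)=F_{\phi(X)}(\phi(x))\qquad\text{for every }x\in\R,
\end{equation*}
with the symmetric identity $F_{Y}(y)=F_{\psi(Y)}(\psi(y))$ following by interchanging the roles of the two coordinates.

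Step two: suppose for contradiction that $\phi$ fails to be nondecreasing. Then there exist $x_{1}<x_{2}$ in the support of $X$ with $\phi(x_{1})>\phi(x_{2})$. Monotonicity of the two univariate distribution functions yields $F_{X}(x_{1})\le F_{X}(x_{2})$ and $F_{\phi(X)}(\phi(x_{1}))\ge F_{\phi(X)}(\phi(x_{2}))$, which combined with the pointwise identity from step one forces $F_{X}(x_{1})=F_{X}(x_{2})$. This contradicts the strict monotonicity of $F_{X}$ on the support of the continuous random variable $X$. The same argument applied to $\psi$ closes the proof.

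The main obstacle is step one: copula equality by itself only asserts equality of the bivariate laws of $(F_{X}(X),F_{Y}(Y))$ and $(F_{\phi(X)}(\phi(X)),F_{\psi(Y)}(\psi(Y)))$ on $[0,1]^{2}$, and not a pointwise relation between the two marginal transformations $F_X$ and $F_{\phi(X)}\circ\phi$. The limiting argument I sketched requires nontrivial hypotheses that are implicit in the statement: for instance, if $X$ and $Y$ are independent (so $C=\Pi$) the conclusion genuinely fails for non-monotone transformations, and if $\psi$ is bounded above the limiting procedure must be modified. Handling these caveats rigorously---perhaps by a nondegeneracy assumption on $C$ coupled with the strict monotonicity of $u\mapsto C(u,v)$ guaranteed by Proposition \ref{increasing_property} for a suitable $v$---is the delicate point to which the proof should devote the most care.
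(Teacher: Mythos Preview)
Your approach differs substantially from the paper's. You try to pass to marginals via a limit in $y$ and then argue by contradiction on the one-dimensional distribution functions; the paper instead differentiates. Under a preliminary smoothness assumption ($\phi\in C^{1}$ and $\psi(0)=0$), the authors compute $\partial_{u}C_{(\phi(X),\psi(Y))}(u,v)$ and, invoking the hypothesis together with Proposition~\ref{increasing_property}, rewrite it as $\phi'(u)$ times a nonnegative partial derivative of $C_{(X,Y)}$. Since the left-hand side is itself nonnegative (copulas are nondecreasing in each argument), they conclude $\phi'(u)\ge 0$; the non-smooth case is then dispatched by a Lebesgue-point and density argument. The symmetric reasoning covers $\psi$.

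The obstacle you flag in your own step one is genuine and not merely technical. Each of the two displayed identities is separately a tautology in the limit: in the first, both sides tend to $F_{X}(x)$; in the second, both sides tend to $F_{\phi(X)}(\phi(x))$. The copula equality $C_{(X,Y)}=C_{(\phi(X),\psi(Y))}$ is therefore never actually invoked, and no relation between $F_{X}$ and $F_{\phi(X)}\circ\phi$ is produced. Your independence counterexample already shows that no such relation \emph{can} be produced from copula equality alone, so the route through marginals cannot succeed without an additional nondegeneracy hypothesis on $C$. The paper's differentiation argument sidesteps this by working directly with the bivariate object rather than collapsing to one dimension---although, as you correctly observe, the independence case remains problematic for the statement itself under either approach.
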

 
 \begin{proof}
 Consider two real functions $\phi$ and $\psi$ such that for all random variables $X$ and $Y$ we have $C_{(X,Y)}(u,v)=C_{(\phi(X),\psi(Y))}(u,v)$ for all couple $(u,v)\in [0,1]^2.$\\
 Assume first that $\phi$ is continuously differentiable and $\psi(0)=0$. According to Definition (\ref{Def1}) and Proposition (\ref{increasing_property}), one may write  
 

\begin{eqnarray*}\label{}
\frac{\partial C_{(\phi(X),\psi(Y))}(u,v)}{\partial u}&=&\frac{\partial C_{(\phi(X),\psi(Y))}(u,v)-C_{(\phi(X),\psi(Y))}(u,0)}{\partial u}\\
\frac{\partial C_{(\phi(X),\psi(Y))}(u,v)}{\partial u}&=& \frac{ \partial C_{(X,Y)}((\phi(u),\psi(v)))-C_{(X,Y)}((\phi(u),\psi(0)))}{\partial x}\\
&=& \phi'(u)\frac{\partial C_{(X,Y)}((\phi(u),\psi(v)))}{\partial x}
\end{eqnarray*} 

The assumption on $C_{(X,Y)}$ and $C_{(\phi(X),\psi(Y))}$ ensures that for all $u\in [0,1]$, one has $\phi'(u)\ge 0.$ In equivalent words, the mapping $\phi$ is nondecreasing. For symmetry reasons in the third item of (\ref{Def1}) an analogous argument is valid to prove that $\psi$ is also an increasing function. \\
For   function $\phi$ without nonzero value at  the origin, one may consider $\Psi=\psi-\psi(0)$ which shares the same monotony with $\psi$ and verifies $\Psi(0)=0.$\\
For general statement ($\phi$ is not necessarily differentiable), one may argue  as follows:
\begin{enumerate}
	\item Almost every real number is Lebesgue point of $\phi$.
\item  For such point $t$ we have $\phi'(t)\ge 0.$
\end{enumerate}

To complete the claim, a topological argument and density of regular functions on $[0,1]$ allows the conclusion.
 	
 \end{proof}


\end{spacing}
\end{document}